\def\R{\mathbb{R}}
\def\E{\mathbb{E}}
\def\N{\mathbb{N}}
\def\n{\noindent}
\newtheorem{theorem}{\sc Theorem}[section]
\newtheorem{corollary}{\sc Corollary}[section]
\newtheorem{lemma}{\sc Lemma}[section]
\newtheorem{example}{\sc Example}[section]
\title{On the capacity functional of excursion sets \\of Gaussian random fields on $\R^2$}
\author{\Large Marie Kratz* and  Werner Nagel $\dagger$\\[1ex]
\small * ESSEC Business School, CREAR \& MAP5, UMR 8145, Univ. Paris Descartes, France  \\ \small $\dagger$ Institut f\"ur Stochastik, 
Friedrich-Schiller-Universit\"at, Jena, Germany\\
\small * kratz@essec.edu;   $\dagger$ werner.nagel@uni-jena.de}
\begin{document}
\setlength{\parindent}{0in}
\parskip 2.0ex  % skip some length before paragraph
\onehalfspacing
\maketitle

% \title{{\itshape On the capacity functional of excursion sets of Gaussian random fields on $\R^2$ }
%
% \author{M. Kratz$^{\rm a}$\thanks{$^\ast$ Corresponding author. 
% Email: werner.nagel@uni-jena.de.  Marie Kratz is also member of MAP5, UMR 8145, Univ. Paris Descartes, France. 
% \vspace{6pt}} and W. Nagel$^{\rm b}$ \vspace{6pt} \\\vspace{6pt}  
% $^{\rm a}${\em ESSEC Business School Paris, CREAR, France }\\ 
% $^{\rm b}${\em Institut f\"ur Stochastik, Friedrich-Schiller-Universit\"at, D-07737 Jena, Germany}\\ \vspace{6pt}\received{ } }}

% \maketitle

\begin{abstract}
When a random field $(X_t, \ t\in {\mathbb R}^2)$ is thresholded on a given level $u$, the excursion set is given by its indicator $~1_{[u , \infty )}(X_t)$.
The purpose of this work is to study functionals (as established in stochastic geometry) of these random excursion sets, as e.g. the capacity functional as well as the second moment measure of the boundary length. It extends results obtained for the one-dimensional case %(see e.g. \cite{eik}) 
to the two-dimensional case, with tools borrowed from crossings theory, in particular Rice methods, and from integral and stochastic geometry. 

\vspace{2ex}

{\it Keywords: Capacity functional; Crossings; Excursion set; Gaussian field; Growing circle method; Rice formulas; Second moment measure; Sweeping line method; Stereology; Stochastic geometry}

\vspace{2ex}

{\it AMS classification}: 60G15,  60D05, 60G60, 60G10,  60G70
\end{abstract}

\section{ Introduction  }
\label{Sec1}

Let ${\mathbb R}^2$ denote the two-dimensional Euclidean plane with the origin $0$, the inner product $\langle\cdot ,\cdot \rangle$, the norm $||\cdot ||$ and the unit sphere ${\cal S}^1=\{ {v}\in {\mathbb R}^2: ||{v}||=1\}$. We will refer to the elements of ${\mathbb R}^2$ both as points and as vectors. The Borel $\sigma$-algebra is denoted ${\cal R}_2$.\\
Let $X$ be a stationary  random field taking values in $\R$, with $C^1$ paths. It will be described by 
$X=(X_x,\, x\in {\mathbb R}^2)$ or $(X_{s{v}}, \,s\in [0,\infty ) ,\, {v}\in {\cal S}^1)$.  We denote by $r$ its correlation function and by $f_{X_0}$ its dimension 1-marginal density function, which is a standard normal density function. \\ 

Denote by $A_u$ the excursion set of the process $X$ over a threshold $u\in\R$, i.e.
\begin{equation}\label{def_exc}
A_u = \{ x\in {\mathbb R}^2: X_x\geq u\} = \{ s{v}: X_{s{v}}\geq u, s\in [0,\infty ) ,\, {v} \in {\cal S}^1\} .
\end{equation}

Since $X$ is a random field with $C^1$ paths, then for all $u\in {\mathbb R}$, the set $A_u$ is a random closed set (see \cite{molch}, section 5.2) and the topological closure of the complement, 
denoted by $cl (A_u^c)$, is also a random closed set (see \cite{s/w}, p. 19 and Theorem 12.2.6.(b)). 
The distribution of a random closed set is fully characterized by its capacity functional $T$ (see \cite{math}, or also \cite{molch, s/w}), which for $A_u$ is defined by
\begin{equation}\label{cf}
T(K)= P(A_u \cap K \ne \emptyset), \quad \mbox{ for all } \mbox{compact subsets } K \subset \R^2.
\end{equation}
Because
$$
T(K)= P(\sup \{X_x; x\in K\} \geq u)
$$
the results for the distribution of the supremum of $X$ over a set $K$ (see e.g. \cite{ad/tay, az/wsc, pit}) can be  applied to the capacity functional. 

Often it is too complicated to describe the capacity functional completely. Therefore one usually restricts the family of sets $K$ considered in (\ref{cf}) to certain parametric families of sets, e.g. circles with varying radius or linear segments with a fixed direction and varying length.
Thus at least partial information about the distribution of the random set is available. This approach is also used in spatial statistics.

In this paper, we choose $k\geq 2$ directions given by unit vectors $v_1,\ldots,v_k\in {\cal S}^1$, and denote by $[0,l_iv_i]=\{ s\, v_i:\, 0\leq s\leq l_i  \}$ the linear segment with one endpoint in the origin 0, length $l_i>0$ and direction $v_i$. We consider the sets
\begin{equation} \label{segments}
K=\cup_{i=1}^k  [0,l_iv_i],\,\mbox{with}\, l_i\geq 0,\, i=1,\ldots ,k\,.
\end{equation} 
By $L_i=\sup \{ l:\,  [0,lv_i] \subset A_u^c\}$, we denote the random distance -- the visibility -- in direction $v_i$ from the origin 0 to the next point of the boundary $\partial A_u$, if  $0\in A_u^c$; otherwise $L_i=0$.  
The joint survival function of the visibilities can now be related to the capacity functional as
\begin{eqnarray}\label{capK}
&& P(L_1>l_1, \ldots,L_k>l_k)= P(K\subset A_u^c) = 1- T(K)  \nonumber \\
&\mbox{or}&  \\
&& T(K)= 1-P\left(X_0<u, \sup_{s\in[0, l_i]}X_{sv_i}<u, ~i=1,\ldots,k \right) \nonumber
\end{eqnarray}
The event in the last expression means that $0\in A_u^c$ and that there is no up-crossing of the process $X$ on the segments of $K$.

Besides the capacity functional of a random set, moment measures of some random measures which are induced by this set are of interest.

In the books by Adler \cite{ad:ad} , Adler and Taylor \cite{ad/tay}, Wschebor \cite{ws}, and Aza\"is and Wschebor \cite{az/wsc}, the geometry of excursion sets is studied thoroughly, in particular in  \cite{ad/tay} with explicit results for the Lipschitz-Killing curvatures (intrinsic volumes) of the excursion sets (see also \cite{ad/samo/tay}). 
In the present paper we consider the capacity functional of the excursion set for families of sets $K$ which consist of two or more linear segments, originating from a common point. This can also be interpreted as the joint distribution of the visibility in different directions from a certain point to the boundary of the excursion set. On the other hand, it can be seen as an approximation of the capacity functional of the excursion set for classes of convex polygons.

To study $T(K)$, we extend results obtained for the one-dimensional case (see e.g. \cite{eik}) to the two-dimensional case and borrow tools from  the literature on  level crossings (see \cite{ad/tay, cr/le, kr}), in particular by using Rice type methods (see \cite{az/wsc, mer, rych, ws}). We also extend an approach given in \cite{mer}, that we call the "sweeping line" method into a "growing circle" method. It will be developed in Section \ref{capa}. 

Furthermore, via our approach, we study  the second moment measure of the boundary length measure of the excursion set, provided that the boundary is smooth enough. 
If the boundary $\partial A_u$ is Hausdorff-rectifiable then with the help of the one-dimensional Hausdorff-measure ${\cal H}^1$, we define the random measure
${\cal L}$ on $[\R^2,{\cal R}_2]$ by 
$$
{\cal L} (B) = {\cal H}^1 (\partial A_u \cap B), \quad \mbox{ for all }B\in {\cal R}_2.
$$
Then the first moment measure, named also intensity measure of the random length measure, is given by 
$$
\mu ^{(1)} (B)=\E [{\cal L} (B) ]= \E [{\cal H}^1 (\partial A_u \cap B)], \, \mbox{ for all } B\in {\cal R}_2,$$ 
and the second moment measure by
$$
\mu ^{(2)} (B_1\times B_2)=\E [{\cal L} (B_1) {\cal L} (B_2)]= \E [{\cal H}^1 (\partial A_u \cap B_1) {\cal H}^1 (\partial A_u \cap B_2)],  \mbox{ for all } B_1,B_2\in {\cal R}_2.
$$
The stationarity of $X$, and thus also of $A_u$, yields that the intensity measure is a multiple of the Lebesgue measure $\lambda_2$ on $[\R^2,{\cal R}_2]$, i.e.
$\mu ^{(1)} = L_A \cdot \lambda_2$ with a positive constant $L_A$ which is the mean length of $\partial A_u$ per unit area.

Furthermore, stationarity allows the following implicit definition  of the reduced second moment measure $\kappa$ on $[\R^2,{\cal R}_2]$:
\begin{equation}\label{redsec}
\mu ^{(2)} (B_1\times B_2) = L_A^2 \int \int ~1_{B_1}(x)\, ~1_{B_2}(x+h) \, \kappa(dh)\, \lambda_2 (dx).
\end{equation}
The value $L_A\, \cdot \kappa (B)$ is the mean length of $\partial A_u$ within $B\in {\cal R}_2$, given that the origin is located at the "typical point" of the boundary (w.r.t. the length measure and the corresponding Palm distribution) (see \cite{stkm, s/w}).

Note that this second moment measure for the length of the boundary has been studied in \cite{az/wsc}  (see Theorems 6.8 and 6.9), using the co-area formula. 
Here we present an alternative approach, based on stereology,  to provide another expression for the second moment measure. 
Since this second moment measure can be determined from intersections of $\partial A_u$  with pairs of lines and from the observation of pairs of intersection points (see \cite{wei/na}), our method of counting crossings of the random field $X$ on linear segments developed in Section 2, can be applied to the estimation of the second moment measure. This will be done in Section \ref{redmom}.\\

From now on, let us assume that $X$ is Gaussian, with mean 0 and variance 1.

\section{A sweeping line and growing circle methods for an algorithmic computation of the capacity functional}\label{capa}

\noindent Sweeping line methods are well established in geometry (e.g. for the definition of the Euler-Poincar{\'e} characteristic of a set), in algorithmic geometry and in image analysis. We will apply it together with Gaussian regression  and discretization to set an algorithmic computation of the capacity functional for a pair of segments.
Then we will modify the method in order to calculate the capacity functional for a bundle of segments, using now circles with growing radius.

Suppose that $C\subset \R^2$ is a compact convex set with  $0\in C$. For $s>0$, we denote by $s\partial C= \{ sx:x\in \partial C  \}$ a homothet of the boundary of $C$, and we consider the family $(s\partial C,\, s>0)$ as a sweeping contour, determined by $C$. In this paper we will only use $C=\{ x\in \R^2: || x|| =1\}$, the boundary of the unit circle around the origin.

\subsection{The capacity functional for a bundle of two line segments} \label{2line-segments}

Consider $K$ defined in (\ref{segments}) with $k=2$, so that $K=[0, l_1v_1]\cup [0,l_2v_2]$, with $v_1\neq v_2$.

\n We also introduce  the ${\cal C}^{1}$-diffeomorphism $\rho$ (except in a finite number of points where it might only be ${\cal C}^{0}$) defined  by
\begin{eqnarray}\label{rho}
\rho:& [0,~ l_1+l_2] & \longrightarrow \quad K  \nonumber\\
& \theta &\longmapsto
\left\{
\begin{array}{ccl}
(l_1-\theta) v_1\,,& \mbox{ if } & 0\le \theta \le l_1\\
(\theta - l_1) v_2 \, ,&  \mbox{ if } & l_1\le \theta \le l_1+l_2.\\
\end{array}
\right .
\end{eqnarray}
We have, via (\ref{capK}), 
$$
P[L_1>l_1, L_2>l_2]=1-P[\sup_{s\in K}X_{s}>u] = 1-P[\sup_{\theta\in [0,l_1+l_2]}Y_{\theta}>u]
$$
where the process $Y=(Y_\theta, 0\le \theta\le l_1+l_2)$ is defined by 
$$
Y_\theta=X(\rho(\theta))\,.
$$
Let $Y'_\theta=\partial_\theta Y_\theta$ denote the derivative of $Y_\theta$ w.r.t. the parameter $\theta$.
Let  $(e_1,e_2)$  be an orthonormal basis in $\R^2$. The idea is to introduce a sweeping line parallel to the $(0e_1)$ axis, and to translate it along the  $(0e_2)$ axis until meeting a $u$-crossing by $X_s$, $s\in K$.\\  
Here we choose the $(0e_2)$ axis in such a way that the vectors $v_1$ and  $v_2$ become symmetric to the  $(0e_2)$ axis and define 
\begin{equation}\label{tildephi}
\tilde\varphi=\angle{(v_2,0e_2)} \in (0;\pi/2], \quad 
v_1=(-\sin \tilde\varphi, \cos \tilde\varphi), \quad v_2=(\sin \tilde\varphi, \cos \tilde\varphi). \quad
\end{equation}

\n We start then with the sweeping line method to express the capacity functional for a bundle of two line segments.
\begin{theorem}\label{prop2chords}
 ~  \\
Let $K=[0, l_1v_1]\cup [0,l_2v_2]$ and $\tilde\varphi$ as in (\ref{tildephi}).
The capacity functional $T$  of $A_u$ is given for $K$, as follows.\\ 
If $l_1 \le l_2$, then 
\begin{eqnarray} \label{2segmentsT1}
T(K) %=1-P[L_1>l_1, L_2>l_2] 
&=& f_{X_0}(u)\int_{[0;l_1]} \left(\E\Big[|Y'_{\theta}|~1_{\big(Y_\eta\le u, \forall \eta\in [\theta;2l_1-\theta]\big)}~/~Y_\theta=u\Big] - \right.  \nonumber\\
&& \qquad\quad \left. \E\Big[|Y'_{2l_1-\theta}|~1_{\big(Y_\eta\le u, \forall \eta\in [\theta;2l_1-\theta]\big)} ~/~Y_{2l_1-\theta}=u\Big]\right) d\theta \nonumber\\
&& + ~ f_{X_0}(u)\int_{[2l_1;l_1+l_2]} \E\Big[|Y'_{\theta}|~1_{\big(Y_\eta\le u, \forall \eta\in [0;\theta]\big)} ~/~Y_\theta=u\Big] d\theta .
\end{eqnarray}
If $l_1 \ge l_2$, then 
\begin{eqnarray} \label{2segmentsT2}
T(K)&=& f_{X_0}(u) \int_{[0;l_1-l_2]} \E\Big[|Y'_{\theta}|~1_{\big(Y_\eta\le u, \forall \eta\in [\theta; l_1+l_2]\big)} ~/~Y_\theta=u\Big] d\theta  \nonumber\\
&& + ~ f_{X_0}(u)\int_{[l_1-l_2;l_1]} \!\!\!  \left(\E\Big[|Y'_{\theta}|1_{\big(Y_\eta\le u, \forall \eta\in [\theta; 2l_1-\theta]\big)} /Y_\theta=u\Big] ~ - \right.  \nonumber\\
&& \qquad \qquad \quad \left. \E\Big[ |Y'_{2l_1-\theta}|1_{\big(Y_\eta\le u, \forall \eta\in [\theta; 2l_1-\theta]\big)} /Y_{2l_1-\theta}=u\Big]   \right) d\theta . \quad
\end{eqnarray}
~
\end{theorem}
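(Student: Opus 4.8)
The plan is to reduce the statement to a one–dimensional level–crossing computation and then carry it out by a ``growing circle'' argument.

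First I would pass, as in \eqref{rho}, to the reparametrised process $Y_\theta=X(\rho(\theta))$ on $[0,l_1+l_2]$, so that, by the last line of \eqref{capK}, $T(K)=P\big(\sup_{0\le\theta\le l_1+l_2}Y_\theta\ge u\big)$. Since $\rho$ is piecewise linear with unit speed and $X$ is stationary, $Y$ is a stationary standard Gaussian process on each of the two open pieces $(0,l_1)$, $(l_1,l_1+l_2)$, continuous on $[0,l_1+l_2]$, with at worst a corner at $\theta=l_1$; in particular $Y_\theta$ has density $f_{X_0}(u)$ at $u$ for every $\theta$, and $(Y_\theta,Y'_\theta)$ is non–degenerate on the two open pieces. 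I would also record here the regularity that legitimises the Rice–type formulas below: for a smooth stationary Gaussian field, a.s. $Y$ has only finitely many $u$–crossings on $[0,l_1+l_2]$ and no tangential one, the relevant expected crossing counts are finite, and the finitely many $\theta$ at which $\rho$ is only $C^0$ are negligible for what follows.

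For the growing–circle step, for $t\ge0$ set $K_t=K\cap\{\|x\|\le t\}$; because $v_1,v_2$ make the same angle $\tilde\varphi$ with $0e_2$, the circle of radius $t$ meets $K$ exactly at $\rho(l_1-t)$ (if $t\le l_1$) and $\rho(l_1+t)$ (if $t\le l_2$) --- this is precisely the sweeping line at height $t\cos\tilde\varphi$ --- so $K_t$ corresponds, in the $\theta$–variable, to $[l_1-t,l_1+t]$ when $t\le\min(l_1,l_2)$ and to $[0,l_1+t]$ when $\min(l_1,l_2)<t\le l_2$ (taking $l_1\le l_2$). Put $H(t)=P\big(Y_\eta\le u\ \forall\eta\in(\text{the corresponding }\theta\text{–interval})\big)$ and $G(t)=1-H(t)=P(A_u\cap K_t\ne\emptyset)$, so $G(0)=P(X_0\ge u)$, $G(l_2)=T(K)$, and --- once $G$ is shown to be absolutely continuous --- $T(K)=P(X_0\ge u)+\int_0^{l_2}G'(t)\,dt$.

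The crux is to identify $G'(t)=-H'(t)$ with a Rice integrand. For $0<t<\min(l_1,l_2)$ both endpoints of $[l_1-t,l_1+t]$ move outward at unit speed, and ``$\sup_{K_t}X\le u$ first fails at radius $t$'' forces, by continuity, $Y$ to equal $u$ at one of $l_1\mp t$ while remaining $\le u$ on all of $[l_1-t,l_1+t]$; a ``first–exceedance'' Rice formula then gives $-H'(t)$ as a sum of two terms, one per moving endpoint, of the form $f_{X_0}(u)\,\E\big[\,|Y'_{l_1\mp t}|\,\mathbf 1_{\{Y_\eta\le u,\ \forall\eta\in[l_1-t,l_1+t]\}}\mid Y_{l_1\mp t}=u\,\big]$ (the window constraint fixes the sign of the derivative at such a crossing, which is why one may keep $|Y'|$). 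For $\min(l_1,l_2)<t\le l_2$ only the right endpoint moves and the window becomes $[0,l_1+t]$, so a single such term appears. Substituting $\theta=l_1-t$ on the first range (so $l_1-t=\theta$, $l_1+t=2l_1-\theta$, window $[\theta,2l_1-\theta]$) and $\theta=l_1+t\in[2l_1,l_1+l_2]$ on the second, and restoring the term $P(X_0\ge u)$, turns $T(K)=P(X_0\ge u)+\int_0^{l_2}G'(t)\,dt$ into an integral over $[0,l_1]$ carrying the two boundary contributions at $\theta$ and $2l_1-\theta$ plus an integral over $[2l_1,l_1+l_2]$ with window $[0,\theta]$; collecting terms gives \eqref{2segmentsT1}, and the case $l_1\ge l_2$ is identical with the symmetric range $t\in[0,l_2]$ and the single–endpoint tail $t\in[l_2,l_1]$, yielding \eqref{2segmentsT2}.

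The main obstacle is the rigorous ``first–exceedance'' formula for $-H'(t)$: one must prove that $t\mapsto H(t)$ is absolutely continuous and that its density equals the expected number of boundary $u$–crossings of $Y$ occurring ``for the first time'' at radius $t$, treating the two moving boundary points jointly and conditioning on a $t$–dependent window --- this is exactly where the Gaussian smoothness and non–degeneracy are used, through a Kac--Rice type estimate, with some extra care at $\theta=l_1$ where $\rho$ is only continuous. A secondary, routine, difficulty is keeping the windows and the $t$–ranges consistent across the two regimes and the two cases $l_1\lessgtr l_2$.
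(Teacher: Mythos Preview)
Your proposal is correct in spirit and uses the same geometric device as the paper --- the sweeping line/growing circle that picks out the \emph{first} point (in height) of $K$ where $X$ reaches $u$ --- and the same Rice-type evaluation. The execution, however, is different, and the paper's version is shorter in exactly the place you flag as the ``main obstacle''.

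Instead of defining $G(t)=P(A_u\cap K_t\neq\emptyset)$, proving it is absolutely continuous, and identifying $G'(t)$ as a boundary Rice density for each $t$, the paper observes, pathwise, that
\[
N:=\#\Big\{\theta\in[0,l_1+l_2]:\ Y_\theta=u,\ X_s\le u\ \forall\, s\in\Gamma_{\rho(\theta)}\Big\}\in\{0,1\}\quad\text{a.s.},
\]
because a second such $\theta$ would require $X_s\le u$ on a sweep region that already contains a genuine up-crossing. Hence $\E[N]$ is simply a probability, and one application of the Rice formula to $\E[N]$ yields the whole integral $f_{X_0}(u)\int_0^{l_1+l_2}\E[\,|Y'_\theta|\,1_{\{X_s\le u,\,s\in\Gamma_{\rho(\theta)}\}}\mid Y_\theta=u\,]\,d\theta$ at once; the splitting into \eqref{2segmentsT1}--\eqref{2segmentsT2} is then pure bookkeeping (rewriting $\Gamma_{\rho(\theta)}$ as a $\theta$-interval and the change of variable $\theta\mapsto 2l_1-\theta$ on $[l_1,2l_1]$). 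So the paper trades your ``differentiate--then--integrate'' for a single expected-count computation, and thereby avoids the absolute-continuity and $t$-dependent-window issues you highlight.

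Two bookkeeping points where your sketch is loose: (i) you describe $-H'(t)$ for $t<\min(l_1,l_2)$ as a \emph{sum} of two endpoint terms and then claim this collapses to the \emph{difference} appearing in \eqref{2segmentsT1}; in the paper that sign comes from the change of variable on the $[l_1,2l_1]$ piece, so you should track it explicitly. (ii) You carry the term $P(X_0\ge u)$ and then say ``restoring'' it gives \eqref{2segmentsT1}, which has no such term; whether this term is genuinely present or absorbed is delicate in both approaches and deserves an explicit argument rather than a phrase.
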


\begin{proof}
As already mentioned, we introduce a sweeping line parallel to the $(0e_1)$ axis and translate it along the  $(0e_2)$ axis until meeting a $u$-crossing by $X_s$, $s\in K$.
Setting 
$$
\Gamma_t =\{s=(s_1,s_2)\in K: s_2\le t_2\}, \; t=(t_1,t_2)\in \R^2,
$$ 
where the parameter $t_2$ indicates the position of that sweeping line, we can write
$$
P[L_1>l_1, L_2>l_2]=1-\E[\# \{\theta\in[0,l_1+l_2], Y_\theta=u, X_s\le u, \forall s\in\Gamma_{\rho(\theta)}\}]  
$$
where $\displaystyle \# \{\theta\in[0,l_1+l_2], Y_\theta=u, X_s\le u, \forall s\in\Gamma_{\rho(\theta)}\}= 1$ if there is a (first) crossing by $X$ on $K$, and 0 otherwise.\\ 
So, using Rice formula ($f_{Y_\theta}$ denoting the density function of $Y_\theta$), then the stationarity of $X$, we obtain
\begin{eqnarray} \label{expectY}
P[L_1>l_1, L_2>l_2]&=& 1-\int_0^{l_1+l_2} \E\big[ |Y'_{\theta}|~1_{(X_s\le u, \forall s\in\Gamma_{\rho(\theta)})} ~/~Y_\theta=u\big] f_{Y_\theta}(u)d\theta\nonumber\\
&=& 1-f_{X_0}(u)\int_0^{l_1+l_2} \E\big[ |Y'_{\theta}|~1_{(X_s\le u, \forall s\in\Gamma_{\rho(\theta)})} ~/~Y_\theta=u\big] d\theta\, . \qquad 
\end{eqnarray}
Note that this type of integrals can be numerically evaluated as in  \cite{mer}.\\
Let us go further in the study of the integral appearing in (\ref{expectY}), reducing the problem to a one-dimensional parameter set. \\
If $l_1 \le l_2$, then 
\begin{eqnarray}  \label{integrandl1<l2}
&&\int_{[0;l_1+l_2]} \E\big[ |Y'_{\theta}|~1_{\big(X_s\le u, \forall s\in\Gamma_{\rho(\theta)}\big)} ~/~Y_\theta=u\big] d\theta ~= \\
&& \int_{[0;l_1]} \left(\E\Big[|Y'_{\theta}|~1_{\big(Y_\eta\le u, \forall \eta\in [\theta;2l_1-\theta]\big)} ~/~Y_\theta=u\Big]
-  \E\Big[|Y'_{2l_1-\theta}|~1_{\big(Y_\eta\le u, \forall \eta\in [\theta;2l_1-\theta]\big)} ~/~Y_{2l_1-\theta}=u\Big]\right)d\theta  \nonumber\\
&& +~\int_{[2l_1;l_1+l_2]} \E\Big[|Y'_{\theta}|~1_{\big(Y_\eta\le u, \forall \eta\in [0;\theta]\big)} ~/~Y_\theta=u\Big] d\theta . \nonumber
\end{eqnarray}
If $l_1\ge l_2$, then 
\begin{eqnarray} \label{integrandl1>l2}
&&\int_{[0;l_1+l_2]} \E\big[ |Y'_{\theta}|~1_{\big(X_s\le u, \forall s\in\Gamma_{\rho(\theta)}\big)} ~/~Y_\theta=u\big] d\theta ~=\\
&& \int_{[0;l_1-l_2]} \E\Big[|Y'_{\theta}|~1_{\big(Y_\eta\le u, \forall \eta\in [\theta; l_1+l_2]\big)} ~/~Y_\theta=u\Big] d\theta ~+ \nonumber\\
&& \int_{[l_1-l_2;l_1]} \!\!\!  \left(\E\Big[|Y'_{\theta}|1_{\big(Y_\eta\le u, \forall \eta\in [\theta; 2l_1-\theta]\big)} /Y_\theta=u\Big] 
- \E\Big[ |Y'_{2l_1-\theta}|1_{\big(Y_\eta\le u, \forall \eta\in [\theta; 2l_1-\theta]\big)} /Y_{2l_1-\theta}=u\Big]   \right) d\theta\,. \nonumber \\
~ \nonumber
\end{eqnarray}
Hence the result.
\end{proof}
%%%%%%%%%%%%
%%%%%%%%%%%%

\n Let $I(\theta)$ denote the following interval (as it appears in the indicator functions of  (\ref{2segmentsT1}) and (\ref{2segmentsT2})):
\begin{equation}\label{Itheta}
I(\theta) = \left\{
\begin{array}{lcccl}
~ [ \theta, 2l_1-\theta] , & \mbox{ for } & l_1\le l_2 &,&  0\le \theta \le l_1\, , \\
~ [ 0,\theta], & \mbox{ for } & l_1\le l_2 &,&   2l_1\le \theta\le l_1+l_2\, , \\
~ [0, l_1+l_2], & \mbox{ for } & l_1> l_2 &,&   0 \le \theta\le l_1-l_2 \, , \\ 
~ [\theta, 2l_1-\theta ], & \mbox{ for } & l_1> l_2 &,&  l_1-l_2 <\theta\le l_1\, .
\end{array}
\right.
\end{equation}
 
\n The integrands appearing  in Theorem~\ref{prop2chords} as conditional expectations of the form \\
$\displaystyle \E\Big[|Y'_{\theta}|~1_{\big(Y_\eta\le u,\, \forall \eta\in I(\theta)\big)} ~/~Y_\theta=u\Big]$
will now be treated via an approximation by discretization. We will use a standard method when working with Gaussian vectors, namely the Gaussian regression (see e.g. \cite{lind}). 
This may allow to handle numerically the computation of the conditional expectations. \\

Before stating the main result, let us introduce some further notation.\\
Let $\partial v_i$  denote the directional derivative w.r.t. $v_i$, for $i=1,2$, which corresponds to
\begin{eqnarray*} 
\partial v_1 X_{lv_1}&=& \lim_{h\to 0}\frac1h \big(X_{(l+h)v_1}-X_{l v_1}\big)\\
&=& - \sin\tilde\varphi ~\partial_{10}X_{-l\sin\tilde\varphi,~l\cos\tilde\varphi} +
\cos\tilde\varphi ~\partial_{01} X_{-l\sin\tilde\varphi,~l\cos\tilde\varphi}
\end{eqnarray*} 
$$
\mbox{and}\quad \partial v_2 X_{lv_2}=  \sin\tilde\varphi ~\partial_{10}X_{l\sin\tilde\varphi,~l\cos\tilde\varphi} +
\cos\tilde\varphi ~\partial_{01} X_{l\sin\tilde\varphi,~l\cos\tilde\varphi}
$$
where $\partial_{ij}$ denotes the partial derivative of order $i+j$ with $i$th partial derivative in direction $e_1$ and  $j$th partial derivative in direction $e_2$.\\
Recall that the covariances between the process $X$ and its partial derivatives, when existing, are given, for $s,t,h_1,h_2\in \R^2$, by (see \cite{kl})
\begin{eqnarray}\label{partial-r}
E\left[\partial_{jk}X_{s+h_1,t+h_2} \cdot \partial_{lm}X_{s,t}\right]
&=& (-1)^{l+m}\partial_{j+l,k+m}r(h_1,h_2), \\
&& \mbox{for all  }  0\le j+k\le 2, ~ 0\le l+m\le 2 . \nonumber
\end{eqnarray}

\begin{theorem}\label{RegresDiscrete}
Let $X$ be a stationary Gaussian random field, mean 0 and variance 1, with $C^1$ paths
and a  twice differentiable correlation function $r$.  
Further, for all $m\in {\mathbb N}$, let $\eta_1,\ldots ,\eta_m$ be equidistant points, partitioning $I(\theta )$ (defined in (\ref{Itheta})), into $m-1$ intervals (where $\eta_1$ and $\eta_m$ coincide with the left and right boundary of $I(\theta)$, respectively).
Then we have
\begin{eqnarray}\label{regresDiscreteEq}
&&  \E\Big[|Y'_{\theta}|~1_{\big(Y_\eta\le u, \forall \eta\in I(\theta)\big)} ~/~Y_\theta=u\Big] \\
&& =  \lim_{m\to\infty}
 \int_\R |y|  F_{\xi^{(m)}}\left(u\big(1-a(\eta_i;\theta)\big)-y ~b(\eta_i;\theta); ~i=1,\ldots,m  \right)f_{Y'_{\theta}}(y)dy \nonumber
\end{eqnarray}
where the density $f_{Y'_{\theta}}$ of $Y'_{\theta}$ is Gaussian  with mean $0$ and variance given by
 \begin{eqnarray*}
\E(Y'^{~2}_\theta)&=& \left( -\partial_{20}r(0,0)\sin^2\tilde\varphi  - \partial_{02}r(0,0)  \cos^2\tilde\varphi + 2  \partial_{11}r(0,0)  \sin\tilde\varphi \cos\tilde\varphi\right) 1_{(0\le \theta < l_1)}\\
&& -  \left(\partial_{20}r(0,0)\sin^2\tilde\varphi  + \partial_{02}r(0,0)  \cos^2\tilde\varphi + 2  \partial_{11}r(0,0)  \sin\tilde\varphi \cos\tilde\varphi\right)1_{(l_1< \theta \le l_1+l_2)},
 \end{eqnarray*}
 $\tilde\varphi$ being defined in (\ref{tildephi}),
and  where $F_{\xi^{(m)}}$ is the cdf of the Gaussian vector $\xi^{(m)}= (\xi_i, ~i=1,\cdots,m)$:  ${\cal N}(0, \Sigma_m)$ with the covariance matrix $\Sigma_m$ given  by
$$
var(\xi_i)= 1-a^2(\eta_i,\theta)-b^2(\eta_i,\theta) 
$$
and, for  $\eta_i, i=1,\ldots,m$ pairwise different,
$$
cov(\xi_i,\xi_j)= a(\eta_i,\eta_j)-a(\eta_i,\theta)a(\eta_j,\theta)-b(\eta_i,\theta)b(\eta_j,\theta)\E(Y'^{~2}_\theta)
$$
the coefficients $a(.,.)$ and $b(.,.)$ being defined below in  (\ref{a}) and (\ref{b}) respectively. \\~
\end{theorem}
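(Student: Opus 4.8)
The plan is to express the conditional expectation $\E\big[|Y'_\theta|\,1_{(Y_\eta\le u,\,\forall\eta\in I(\theta))}\mid Y_\theta=u\big]$ first as a limit over finite discretizations and then carry out Gaussian regression on the finite-dimensional Gaussian vector that appears. As a first step I would show that, for each fixed $\theta$, the event $\{Y_\eta\le u,\ \forall\eta\in I(\theta)\}$ is the decreasing limit of the cylinder events $\{Y_{\eta_i}\le u,\ i=1,\ldots,m\}$ as the mesh $m\to\infty$. Since $Y$ has continuous paths (it is $X\circ\rho$ with $X$ having $C^1$ paths and $\rho$ piecewise $C^1$), the supremum of $Y$ over $I(\theta)$ is attained and equals the limit of the maxima over the finite grids; hence the indicators converge pointwise (monotonically), and by dominated convergence applied under the conditional law $Y_\theta=u$ — with the integrable dominating function $|Y'_\theta|$, whose conditional moments are finite because the field is Gaussian with twice-differentiable $r$ — we may interchange limit and conditional expectation. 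This reduces the claim to evaluating $\E\big[|Y'_\theta|\,1_{(Y_{\eta_i}\le u,\,i=1,\ldots,m)}\mid Y_\theta=u\big]$ and passing to the limit.

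Next I would perform the Gaussian regression. Conditionally on $Y_\theta=u$, the vector $\big(Y'_\theta,\,Y_{\eta_1},\ldots,Y_{\eta_m}\big)$ is still jointly Gaussian; I would write the regression of each $Y_{\eta_i}$ on the pair $(Y_\theta, Y'_\theta)$, namely $Y_{\eta_i}=a(\eta_i;\theta)\,Y_\theta+b(\eta_i;\theta)\,Y'_\theta+\xi_i$, where $a,b$ are the regression coefficients (the formulas~(\ref{a}),~(\ref{b}) referred to in the statement, obtained by inverting the $2\times2$ covariance matrix of $(Y_\theta,Y'_\theta)$), and $\xi_i$ is the residual, independent of $(Y_\theta,Y'_\theta)$. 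One must check that $(Y_\theta, Y'_\theta)$ are independent at each fixed $\theta$ (stationarity of $X$ gives $\E[X_0\,\partial_{ij}X_0]$-type relations, so $\E[Y_\theta Y'_\theta]=0$), which makes the regression coefficients take the simple displayed form with $\mathrm{var}(Y_\theta)=1$ and $\mathrm{var}(Y'_\theta)=\E(Y'^{\,2}_\theta)$; the latter variance I would compute from~(\ref{partial-r}) and the chain rule through $\rho$, distinguishing $0\le\theta<l_1$ (direction $v_1$) from $l_1<\theta\le l_1+l_2$ (direction $v_2$), recovering the two expressions in the statement. On the event $\{Y_\theta=u\}$ the constraint $Y_{\eta_i}\le u$ becomes $\xi_i\le u-a(\eta_i;\theta)u-b(\eta_i;\theta)Y'_\theta = u(1-a(\eta_i;\theta))-Y'_\theta\,b(\eta_i;\theta)$.

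Then, conditioning further on $Y'_\theta=y$ (whose conditional-on-$Y_\theta=u$ law is just its marginal $\mathcal N(0,\E(Y'^{\,2}_\theta))$ by the independence above), and using that $\xi^{(m)}=(\xi_1,\ldots,\xi_m)$ is independent of $(Y_\theta,Y'_\theta)$, the conditional expectation factors as
\begin{equation*}
\E\big[|Y'_\theta|\,1_{(Y_{\eta_i}\le u,\,i\le m)}\mid Y_\theta=u\big]
=\int_\R |y|\,F_{\xi^{(m)}}\!\big(u(1-a(\eta_i;\theta))-y\,b(\eta_i;\theta);\,i=1,\ldots,m\big)\,f_{Y'_\theta}(y)\,dy,
\end{equation*}
which is exactly the integrand in~(\ref{regresDiscreteEq}). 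It remains to identify the covariance structure of $\xi^{(m)}$: $\mathrm{var}(\xi_i)=\mathrm{var}(Y_{\eta_i})-a^2(\eta_i,\theta)\mathrm{var}(Y_\theta)-b^2(\eta_i,\theta)\mathrm{var}(Y'_\theta)=1-a^2(\eta_i,\theta)-b^2(\eta_i,\theta)\E(Y'^{\,2}_\theta)$, and likewise for the covariances $\mathrm{cov}(\xi_i,\xi_j)$ one subtracts the regressed parts from $\mathrm{cov}(Y_{\eta_i},Y_{\eta_j})=a(\eta_i,\eta_j)$ (the correlation of $X$ at the two corresponding points, expressed through $r$), giving the stated formula. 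Finally, taking $m\to\infty$ and invoking the first step yields~(\ref{regresDiscreteEq}).

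\textbf{Main obstacle.} The delicate point is the justification of the limit interchange in the first step: one needs that conditionally on $Y_\theta=u$ the path $Y$ is still a.s.\ continuous on $I(\theta)$ (so that the grid maxima increase to the true supremum and the cylinder-event indicators decrease to $1_{(\sup_{I(\theta)}Y\le u)}$ with no discrepancy on a null set), together with uniform integrability of $|Y'_\theta|$ under the family of conditional laws — both of which require care because conditioning a Gaussian field on $Y_\theta=u$ is a singular conditioning and because at $\theta=l_1$ the diffeomorphism $\rho$ has a corner, so $Y'_\theta$ may fail to exist there; one handles this by noting that this is a single point, of measure zero in the outer $d\theta$ integral of Theorem~\ref{prop2chords}, and works on the open sub-intervals. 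A secondary, purely computational, nuisance is keeping the bookkeeping of $a(\cdot,\cdot)$ and $b(\cdot,\cdot)$ consistent across the two regimes $\theta<l_1$ and $\theta>l_1$, since the sign of $v_1$ versus $v_2$ enters the mixed partial $\partial_{11}r$.
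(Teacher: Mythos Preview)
Your proposal is correct and follows essentially the same route as the paper: the paper isolates the discretization step as a separate lemma (proved by a sandwich argument involving the event that some subinterval carries at least two crossings, rather than your direct monotone-cylinder argument), and then performs exactly the Gaussian regression of $Y_{\eta_i}$ on the independent pair $(Y_\theta,Y'_\theta)$, conditions on $Y'_\theta=y$, and integrates against $f_{Y'_\theta}$ to obtain the integral with $F_{\xi^{(m)}}$. One small caveat: the equidistant grids for different $m$ are not nested, so your word ``monotonically'' is not literally accurate, but pointwise convergence of the indicator still follows from path continuity and density of the grid, which is all that dominated convergence requires.
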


We can deduce from this theorem an approximation quite useful for a numerical evaluation of the capacity functional, namely:
\begin{corollary}\label{numericalApprox}
The capacity functional $T(K)$ given in Theorem~\ref{prop2chords} can be numerically evaluated by approximating, for large $m$, its integrands as:
\begin{eqnarray}\label{approximEq}
&&\E\Big[|Y'_{\theta}|~1_{\big(Y_\eta\le u, \forall \eta\in I(\theta)\big)} ~/~Y_\theta=u\Big] \\ 
&& \approx  \int_\R |y|  F_{\xi^{(m)}}\left(u\big(1-a(\eta_i;\theta)\big)-y ~b(\eta_i;\theta); ~i=1,\ldots,m  \right)f_{Y'_{\theta}}(y)dy \,. \nonumber\\
&&\nonumber
\end{eqnarray}
\end{corollary}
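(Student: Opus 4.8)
\emph{Step 1 (reduction to a single truncated integrand).} The corollary is, at bottom, a restatement of Theorem~\ref{RegresDiscrete}, so the plan is short. Fix $\theta$, write $G(\theta)$ for the conditional expectation on the left of (\ref{regresDiscreteEq}), $G(\theta)=\E\big[|Y'_\theta|\,1_{(Y_\eta\le u,\,\forall\eta\in I(\theta))}\,/\,Y_\theta=u\big]$, and write $J_m(\theta)$ for the integral on the right of (\ref{regresDiscreteEq}) \emph{before} the limit is taken,
\[
J_m(\theta)=\int_\R |y|\,F_{\xi^{(m)}}\big(u(1-a(\eta_i;\theta))-y\,b(\eta_i;\theta);\ i=1,\ldots,m\big)\,f_{Y'_\theta}(y)\,dy .
\]
Theorem~\ref{RegresDiscrete} says exactly that $G(\theta)=\lim_{m\to\infty}J_m(\theta)$; hence for every $\varepsilon>0$ and every $\theta$ there is an $m_0$ with $|G(\theta)-J_m(\theta)|<\varepsilon$ for all $m\ge m_0$, which is precisely (\ref{approximEq}). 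At the level of one integrand there is nothing further to do.

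\emph{Step 2 (propagation to $T(K)$).} It remains to feed (\ref{approximEq}) into the formulas of Theorem~\ref{prop2chords}. Every term under an integral in (\ref{2segmentsT1})--(\ref{2segmentsT2}) has the shape $\E\big[|Y'_\tau|\,1_{(Y_\eta\le u,\,\forall\eta\in I(\theta))}\,/\,Y_\tau=u\big]$ with $\tau\in\{\theta,\,2l_1-\theta\}$, and in each of the four cases of (\ref{Itheta}) the conditioning point $\tau$ is one of the two endpoints of the interval $I(\theta)$; the term with $\tau=2l_1-\theta$ is the same object with the regression of Theorem~\ref{RegresDiscrete} performed at the other endpoint, i.e. with $a(\cdot\,;2l_1-\theta)$, $b(\cdot\,;2l_1-\theta)$ and $\E\big[(Y'_{2l_1-\theta})^2\big]$ replacing the corresponding quantities at $\theta$. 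Substituting the associated $J_m$ for each such term, multiplying by the constant $f_{X_0}(u)$, and integrating $\theta$ over the bounded ranges of Theorem~\ref{prop2chords} yields a finite-$m$ approximation of $T(K)$ whose pieces are all computable: the outstanding integrals in $\theta$ and in $y$ are one-dimensional, $f_{Y'_\theta}$ and $f_{X_0}$ are explicit centred Gaussian densities, and $F_{\xi^{(m)}}$ is the cdf of the Gaussian vector ${\cal N}(0,\Sigma_m)$ whose entries Theorem~\ref{RegresDiscrete} gives through $a(\cdot,\cdot)$, $b(\cdot,\cdot)$ and the second derivatives $\partial_{jk}r(0,0)$ of $r$ via (\ref{partial-r}).

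\emph{Step 3 (making the error explicit).} To upgrade the pointwise statement of Step~1 into a bound on the error actually committed on $T(K)$, I would use the regression decomposition underlying Theorem~\ref{RegresDiscrete}, which gives $J_m(\theta)=\int_\R|y|\,P\big(Y_{\eta_i}\le u,\ i\le m\ /\ Y_\theta=u,\,Y'_\theta=y\big)\,f_{Y'_\theta}(y)\,dy$; likewise $G(\theta)$ is the same integral with $\{Y_{\eta_i}\le u,\ i\le m\}$ replaced by the smaller event $\{Y_\eta\le u,\ \forall\eta\in I(\theta)\}$. Hence (\ref{approximEq}) overestimates, and $0\le J_m(\theta)-G(\theta)=\int_\R|y|\,P\big(Y_{\eta_i}\le u\ \forall i,\ \sup_{\eta\in I(\theta)}Y_\eta>u\ /\ Y_\theta=u,\,Y'_\theta=y\big)\,f_{Y'_\theta}(y)\,dy$. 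Since $X$ --- hence $Y$ restricted to each of the two segments --- has continuous ($C^1$) sample paths, $\max_iY_{\eta_i}\to\sup_{\eta\in I(\theta)}Y_\eta$ along the equidistant partitions, so the last integrand tends to $0$ while staying bounded by $|y|\,f_{Y'_\theta}(y)$; dominated convergence (first in $\omega$, then in $\theta$, the latter dominated by the $\theta$-bounded constant $\E|Y'_\theta|$) then gives $J_m(\theta)\to G(\theta)$ for each $\theta$ and $\int J_m\,d\theta\to\int G\,d\theta$, i.e. the finite-$m$ expression of Step~2 converges to $T(K)$.

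\emph{Main obstacle.} For the corollary as stated --- a qualitative ``for large $m$'' approximation --- essentially nothing has to be proved beyond the bookkeeping of Steps~1--2, the real content being already inside Theorem~\ref{RegresDiscrete}. The only genuinely delicate point is the refinement in Step~3: turning the limit into an \emph{explicit, uniform} error bound for $T(K)$ requires controlling the speed at which $P\big(Y_{\eta_i}\le u,\ i\le m\ /\ Y_\theta=u,\,Y'_\theta=y\big)$ decreases to its continuous counterpart, uniformly in $(\theta,y)$ over the relevant ranges --- a bona fide (if classical) sample-path/small-deviation estimate for Gaussian processes rather than a formal manipulation.
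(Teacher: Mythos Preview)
Your proposal is correct and matches the paper's approach: the paper gives no separate proof for the corollary at all, presenting it as an immediate consequence of Theorem~\ref{RegresDiscrete} (``We can deduce from this theorem an approximation\ldots''), which is exactly your Step~1. Your Steps~2--3 go beyond what the paper writes by spelling out the propagation to $T(K)$ and the one-sided error structure $J_m(\theta)\ge G(\theta)$, but this is extra care rather than a different argument.
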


The proof of Theorem~\ref{RegresDiscrete}  is based on the following lemma.
\begin{lemma}\label{khintchinExt}
Under the assumptions of Theorem~\ref{RegresDiscrete}, we have
$$
 \E\Big[|Y'_{\theta}|~1_{\big(Y_\eta\le u, \forall \eta\in I(\theta)\big)} ~/~Y_\theta=u\Big] = \lim_{m\to\infty}  \E\Big[|Y'_{\theta}|~1_{\big(Y_{\eta_1}\le u, \ldots, Y_{\eta_m}\le u \big)} ~/~Y_\theta=u\Big]\, .
$$
\end{lemma}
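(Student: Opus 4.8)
The plan is to turn the identity into a pointwise‑convergence‑plus‑domination statement under the Gaussian conditional law given $Y_\theta=u$. Fix $\theta$ and write $D_m:=\{Y_{\eta_1}\le u,\ldots,Y_{\eta_m}\le u\}$ for the event appearing on the right‑hand side and $D:=\{Y_\eta\le u,\ \forall\eta\in I(\theta)\}$ for the one on the left. First I would record that the process $\eta\mapsto Y_\eta=X(\rho(\eta))$ has continuous paths: $X$ has $C^1$ — in particular continuous — paths, and $\rho$ is $C^0$ on all of $[0,l_1+l_2]$ (it may fail to be $C^1$ only at $\theta=l_1$), so $Y$ is continuous on $I(\theta)$; call $\Omega_0$ the almost sure event on which this holds. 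Since each $\eta_i\in I(\theta)$ one has $D\subset D_m$, hence $1_{D_m}\ge 1_D$.

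Next I would prove that $1_{D_m}\to 1_D$ pointwise on $\Omega_0$. On $D$ both indicators are identically $1$. For $\omega\in\Omega_0\setminus D$ there is $\eta^\ast\in I(\theta)$ with $Y_{\eta^\ast}(\omega)>u$, and by continuity of $\eta\mapsto Y_\eta(\omega)$ the set $\{\eta\in I(\theta):Y_\eta(\omega)>u\}$ is relatively open and contains a subinterval of $I(\theta)$ of some positive length; since the $\eta_1,\ldots,\eta_m$ are equidistant their mesh is $|I(\theta)|/(m-1)\to 0$ (the statement being trivial if $I(\theta)$ is reduced to a single point), so for $m$ large some $\eta_i$ falls in that subinterval, whence $Y_{\eta_i}(\omega)>u$ and $\omega\notin D_m$. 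Thus $1_{D_m}(\omega)\to 0=1_D(\omega)$.

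Then I would pass the limit through the conditional expectation. Because $r$ is twice differentiable, $(Y_\theta,Y'_\theta,Y_{\eta_1},\ldots,Y_{\eta_m})$ is Gaussian, so the conditional law of $(Y'_\theta,(Y_{\eta_i})_i)$ given $Y_\theta=u$ is Gaussian, under it $Y'_\theta$ has a finite absolute moment, and $\big|\,|Y'_\theta|\,1_{D_m}\big|\le|Y'_\theta|$ is conditionally integrable and independent of $m$. Since $1_{D_m}\to 1_D$ holds a.s., hence a.s. under this conditional law, dominated convergence yields
\[
\lim_{m\to\infty}\E\big[|Y'_\theta|\,1_{D_m}~/~Y_\theta=u\big]=\E\big[|Y'_\theta|\,1_{D}~/~Y_\theta=u\big],
\]
which is exactly the claim. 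To avoid conditioning on the null event $\{Y_\theta=u\}$, I would instead carry out the same argument on the Gaussian regression representation used in Theorem~\ref{RegresDiscrete}: there the $m$‑th integrand is $|y|$ times a number in $[0,1]$, it converges pointwise, and it is dominated by $|y|f_{Y'_\theta}(y)\in L^1(\R)$, so dominated convergence applies directly.

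The main obstacle — really the only non‑routine point — is making the interchange of limit and conditional expectation rigorous: one must fix once and for all the meaning of $\E[\,\cdot\mid Y_\theta=u\,]$ (as a disintegration of the joint Gaussian law, or through the regression formula) so that almost sure statements and the dominated convergence theorem are legitimately available. Everything else reduces to the continuity of $Y$ and to the fact that equidistant partitions of $I(\theta)$ become dense as $m\to\infty$.
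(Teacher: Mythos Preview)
Your proof is correct and follows essentially the same route as the paper: both establish the sandwich $1_D\le 1_{D_m}$ and then pass to the limit via dominated convergence with $|Y'_\theta|$ (conditionally integrable) as the dominating function. The only difference is in how the pointwise convergence $1_{D_m}\to 1_D$ is justified: the paper bounds $1_{D_m}-1_D$ by the indicator $1_{\bigcup_i D_i^{(m)}}$ that some partition subinterval contains at least two $u$-crossings and then uses that the total number of crossings on $I(\theta)$ is a.s.\ finite, whereas you argue directly from path continuity that any excursion above $u$ has positive length and is eventually hit by the mesh. Your version is a touch more elementary since it bypasses the finite-crossing argument, but the overall architecture is the same.
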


\begin{proof} 
Let $\displaystyle D_i^{(m)} = \{C_u(I_i^{(m)})\ge 2 \}$ denote the event that the number of crossings in the interval $I_i^{(m)}$, $i=1,\ldots ,m-1$, is larger or equal than 2, where $I_i^{(m)}$ is the $i$th open interval of the equidistant partition of $I(\theta)$ into $m-1$ intervals by $\eta_1,\ldots,\eta_m$.

Noticing that 
$$
1_{( Y_{\eta_1}\le u, \ldots, Y_{\eta_m}\le u )} - 1_{\bigcup_{i=1}^{m-1}D_i^{(m)}} \leq 1_{(Y_\eta\le u,\, \forall \eta\in I(\theta))}  \leq 1_{( Y_{\eta_1}\le u, \ldots, Y_{\eta_m}\le u )},
$$
we can write
\begin{eqnarray}\label{indicat}
 \E\Big[|Y'_{\theta}|~\Big(1_{( Y_{\eta_1}\le u, \ldots, Y_{\eta_m}\le u )}- 1_{\bigcup_{i=1}^{m-1}D_i^{(m)}} \Big) ~/~Y_\theta=u\Big] 
 &\le &   \E\Big[|Y'_{\theta}|~1_{(Y_\eta\le u, \forall \eta\in I(\theta))} ~/~Y_\theta=u\Big] \nonumber \\
\! \!\!&\le & \E\Big[|Y'_{\theta}|~1_{( Y_{\eta_1}\le u, \ldots, Y_{\eta_m}\le u )} ~/~Y_\theta=u\Big] . \qquad
 \end{eqnarray}
Moreover, since 
$\displaystyle \, \forall m\in \N , \; |Y'_\theta| \,1_{\bigcup_{i=1}^{m-1} D_i^{(m)}} \, \le \, |Y'_\theta | 
$,
and  $|Y'_\theta|$ is integrable w.r.t. the conditional distribution given $Y_\theta=u$ (the number of crossings in $I(\theta)$ having finite mean),
then,  using the theorem of dominated convergence, we obtain
$$
 \lim_{m\to\infty}   \E[|Y'_\theta |  1_{\bigcup_{i=1}^{m-1} D_i^{(m)}} \,/\, Y_\theta = u] = \E[|Y'_\theta | \lim_{m\to\infty}  1_{\bigcup_{i=1}^{m-1} D_i^{(m)}} \,/\, Y_\theta = u]\,.
$$
Since  $\displaystyle \lim_{m\to\infty}  1_{\bigcup_{i=1}^{m-1} D_i^{(m)}}=0$ for almost all paths of $Y$, we can deduce that
\begin{equation}\label{domina}
 \lim_{m\to\infty}   \E[|Y'_\theta |  1_{\bigcup_{i=1}^{m-1} D_i^{(m)}} \,/\, Y_\theta = u] =0\,.
\end{equation}
Combining (\ref{indicat}) and (\ref{domina}) allows to conclude that
$$
\E\Big[|Y'_{\theta}|~1_{(Y_\eta\le u, \,\forall \eta\in I(\theta))} ~/~Y_\theta=u\Big] =\lim_{m\to \infty}  \E\Big[|Y'_{\theta}|~1_{( Y_{\eta_1}\le u, \ldots, Y_{\eta_m}\le u )} ~/~Y_\theta=u\Big]\, . 
$$
\end{proof}

{\bf Proof of Theorem~\ref{RegresDiscrete}:}
Regressing the random vector $Y^{(m)}_\eta=(Y_{\eta_1}, \cdots,Y_{\eta_m})$, $m\ge 1$, on $Y_\theta$ and $Y'_\theta$ , which  are independent at fixed $\theta$ (see e.g. \cite{cr/le}), gives
\begin{equation}\label{regression}
Y^{(m)}_\eta=\delta^{(m)} ~\xi^{(m)} +a^{(m)} Y_\theta+b^{(m)}Y'_\theta 
\end{equation}
where the deterministic vectors $\delta^{(m)} = (\delta(\eta_1,\theta),\cdots, \delta(\eta_m,\theta))$, \\
$a^{(m)} = (a(\eta_1,\theta),\cdots, a(\eta_m,\theta))$ and $b^{(m)} = (b(\eta_1,\theta),\cdots, b(\eta_m,\theta))$ have their components defined respectively by 
$$
\delta(\alpha,\theta)= 1_{(\alpha\neq \theta)}; \qquad a(\theta,\theta)=1; \qquad b(\theta,\theta)=0
$$
and, for $\alpha\neq \theta$,
\begin{eqnarray} \label{a}  
a(\alpha,\theta)&=& \E[Y_\alpha Y_\theta] \\
&=& \left\{
\begin{array}{lcl}
r\big( (\theta-\alpha)\sin\tilde\varphi, (\alpha-\theta)\cos\tilde\varphi \big) &\mbox{if} & 0\le \theta,\alpha\le l_1\\
r\big((\theta-\alpha) \sin\tilde\varphi , (\theta-\alpha)\cos\tilde\varphi \big) &\mbox{if} & \theta,\alpha \ge l_1\\
r\big(  (2l_1-\alpha-\theta)\sin\tilde\varphi, (\theta-\alpha)\cos\tilde\varphi\big) &\mbox{if} & 0\le \theta\le l_1\le \alpha\le l_1+l_2\\
r\big( (2l_1-\alpha-\theta) \sin\tilde\varphi, (\alpha-\theta)\cos\tilde\varphi \big) &\mbox{if} & 0\le \alpha\le l_1\le \theta\le l_1+l_2
\end{array} \nonumber 
\right.
 \end{eqnarray}
 \begin{eqnarray}\label{b}
&& b(\alpha,\theta)~ = ~ \E[Y_\alpha Y'_\theta] = \E[Y_\alpha \partial_{v_1}Y_\theta] 1_{(\theta\in[0,l_1])}+ \E[Y_\alpha \partial_{v_2}Y_\theta] 1_{(\theta\in(l_1,l_1+l_2])} \\
\!\!\!\!\! &\!\!\!\!\! =\!\!\!\!\!& \left\{ 
\begin{array}{l}
- \sin\tilde\varphi\partial_{10}r\big((\theta-\alpha)\sin\tilde\varphi,(\alpha-\theta)\cos\tilde\varphi\big)
+ \cos\tilde\varphi\partial_{01}r\big((\theta-\alpha)\sin\tilde\varphi,(\alpha-\theta)\cos\tilde\varphi\big) ~ \mbox{if} ~ 0\le \theta,\alpha\le l_1\\
%%%
\sin\tilde\varphi ~\partial_{10}r\big( (\alpha-\theta)\sin\tilde\varphi, (\alpha-\theta) \cos\tilde\varphi \big) 
+\cos\tilde\varphi ~\partial_{01}r\big( (\alpha-\theta)\sin\tilde\varphi, (\alpha-\theta)\cos\tilde\varphi\big)  ~ \mbox{if} ~ \theta,\alpha \ge l_1\\
%%%
\sin\tilde\varphi\partial_{10}r\big((\alpha-\theta)\sin\tilde\varphi,(\alpha+\theta-2l_1)\cos\tilde\varphi\big)
- \cos\tilde\varphi\partial_{01}r\big((\alpha-\theta)\sin\tilde\varphi,(\alpha+\theta-2l_1)\cos\tilde\varphi\big)  \\
%%%
\qquad \qquad\qquad \qquad\qquad \qquad\qquad\qquad \qquad \qquad \qquad \qquad\qquad\quad \mbox{if} \quad 0\le \theta\le l_1\le \alpha\le l_1+l_2\\
%%%
\sin\tilde\varphi ~\partial_{10}r\big( (\theta-\alpha)\sin\tilde\varphi, (\alpha+\theta-2l_1)\cos\tilde\varphi \big) 
+\cos\tilde\varphi ~\partial_{01}r\big( (\theta-\alpha)\sin\tilde\varphi, (\alpha+\theta-2l_1) \cos\tilde\varphi \big) \\
%%%
\qquad \qquad\qquad \qquad\qquad \qquad\qquad\qquad \qquad \qquad \qquad \qquad\qquad\quad \mbox{if} \quad 0\le \alpha\le l_1\le \theta\le l_1+l_2
\end{array}\right.  \nonumber
 \end{eqnarray}
and where the random vector $\xi^{(m)}= (\xi_1,\cdots,\xi_m)$ is independent of  $(Y_\theta, Y'_\theta)$, Gaussian ($F_{\xi^{(m)}}$ denoting its cdf), mean $0$, covariance matrix $\Sigma_m$ with 
$$
var(\xi_i)= 1-a^2(\eta_i,\theta)-b^2(\eta_i,\theta) \quad (i=1,\cdots,m),
$$
and, for $\eta_1,\cdots,\eta_m$ pairwise different,
$$
cov(\xi_i,\xi_j)=
\E(\xi_i\xi_j)= a(\eta_i,\eta_j)-a(\eta_i,\theta)a(\eta_j,\theta)-b(\eta_i,\theta)b(\eta_j,\theta)\E(Y'^{~ 2}_\theta)
$$
since  $\displaystyle \E(Y_\theta^2)= var(X_{\rho(\theta)})=1$.
Using (\ref{partial-r})  gives,\\
on one hand, if $0\le \theta < l_1$,
 \begin{eqnarray*}
\E(Y'^{~2}_\theta)&=& \E[(\partial_{v_1}X_{(l_1-\theta)v_1})^2]\\
&=&\E\Big[\left(-\sin\tilde\varphi ~ \partial_{10}X_{-(l_1-\theta)\sin\tilde\varphi,~(l_1-\theta)\cos\tilde\varphi} + 
\cos\tilde\varphi ~\partial_{01}X_{-(l_1-\theta)\sin\tilde\varphi,~(l_1-\theta)\cos\tilde\varphi} \right)^2\Big]\\
&= & -\partial_{20}r(0,0)\sin^2\tilde\varphi  - \partial_{02}r(0,0)  \cos^2\tilde\varphi + 2  \partial_{11}r(0,0)  \sin\tilde\varphi \cos\tilde\varphi
 \end{eqnarray*}
and, on the other hand, if $l_1< \theta \le l_1+l_2$,
  \begin{eqnarray*}
\E(Y'^{~2}_\theta)&= & -\partial_{20}r(0,0)\sin^2\tilde\varphi  - \partial_{02}r(0,0)  \cos^2\tilde\varphi - 2  \partial_{11}r(0,0)  \sin\tilde\varphi \cos\tilde\varphi\,.
 \end{eqnarray*}
 Therefore, using this Gaussian regression for any vector $Y^{(m)}_\eta$ of any size $m$, and the independence of ($Y_\theta, Y'_\theta, \xi$),
we can write, for the interval $I(\theta)$, $\xi=(\xi_\eta)$ denoting the Gaussian process defined by its finite dimensional distributions (fidis) of $\xi^{(m)}$,
\begin{eqnarray*}
&& \E\Big[|Y'_{\theta}|~1_{\big(Y_\eta\le u, \, \forall \eta\in I(\theta)\big)} ~/~Y_\theta=u\Big] \\
&  =& \E\big[ |Y'_{\theta}|~1_{\big(b(\eta,\theta)Y'_\theta\le u(1-a(\eta,\theta))-\delta(\eta,\theta)\xi_\eta, ~\forall \eta\in I(\theta)\big)} \big] \\
 & =& \E\Big(\E\big[ |Y'_{\theta}|~1_{\big(b(\eta,\theta)Y'_\theta\le u(1-a(\eta,\theta))-\delta(\eta,\theta)\xi_\eta, ~\forall \eta\in I(\theta)\big)} \big]~/~\xi\Big)\,.
  \end{eqnarray*}
To compute this last expression, we proceed by discretization, working on vectors. We have, for a given vector $(\eta_1,\ldots,\eta_m)$, \\
 \begin{eqnarray}\label{UBintegrand}
 && \E\Big[|Y'_{\theta}|~1_{\big(Y_{\eta_1}\le u, \ldots, Y_{\eta_m}\le u \big)} ~/~Y_\theta=u\Big] \nonumber\\
 &=&  \int _{\R^m}\E\big[ |Y'_{\theta}|~1_{\big(b(\eta_i;\theta)Y'_\theta\le u(1-a(\eta_i;\theta))-z_i; ~i=1,\ldots,m\big)}
  ~/ ~\xi^{(m)}=z \big] f_{\xi^{(m)}}(z)dz  \nonumber\\
 &=& \int _{\R^m}\E\big[ |Y'_{\theta}|~1_{\big(b(\eta_i;\theta)Y'_\theta\le u(1-a(\eta_i;\theta))-z_i; ~i=1,\ldots,m\big)} \big] f_{\xi^{(m)}}(z)dz  \nonumber\\
 &=& \int_\R |y|  \int _{\R^m} 1_{\big(z_i \le (1-a(\eta_i;\theta))u-b(\eta_i;\theta)y; ~i=1,\ldots,m\big)} f_{\xi^{(m)}}(z)dz f_{Y'_{\theta}}(y)dy  \nonumber\\
 &=& \int_\R |y|  P\left[ \xi_i \le u\big(1-a(\eta_i;\theta)\big)-y~b(\eta_i;\theta); ~i=1,\ldots,m  \right]f_{Y'_{\theta}}(y)dy  \nonumber\\
 &=&  \int_\R |y|  F_{\xi^{(m)}}\left(u\big(1-a(\eta_i;\theta)\big)-y ~b(\eta_i;\theta); ~i=1,\ldots,m  \right)f_{Y'_{\theta}}(y)dy \qquad
 \end{eqnarray}
using the independence of $\xi$ and $Y'_\theta$ in the second equality.\\
Taking the limit as $m\to\infty$ in the previous equations and applying Lemma~\ref{khintchinExt} provide the result (\ref{regresDiscreteEq}). \hfill $\Box$
%%%%%

\begin{example}
Let us consider a stationary and isotropic Gaussian process $X$, with correlation function $r$  defined on $\R^2$ by 
$$r(x)=e^{-||x||^2/2}$$ 
Then , for $x=(x_1,x_2)$, we have:
\begin{eqnarray*} 
&&\partial_{10}r(x)=-x_1r(x); \quad \partial_{01}r(x)=-x_2r(x); \quad \partial_{11}r(x)=-x_2\partial_{10}r(x)=-x_1\partial_{01}r(x); \\
&&\partial_{20}r(x)=(x_1^2-1)r(x); \quad \partial_{02}r(x)=(x_2^2-1)r(x)
\end{eqnarray*}
hence the variance of $Y'_\theta$ becomes
$$ 
\E(Y'^{~2}_\theta)=1, \,\,\forall \theta\in [0,l_1+l_2]
$$ 
and the coefficients $a(.,.)$ and $b(.,.)$ satisfy
\begin{eqnarray*} 
a(\alpha,\theta)&=& a(\theta,\alpha)\\
&=& \left\{
\begin{array}{lcl}
\exp\left\{-\frac12 (\alpha-\theta)^2\right\} &\mbox{if} & 0\le \theta,\alpha\le l_1 ~ \mbox{or if }~  \theta,\alpha \ge l_1\\
&&\\
\exp\left\{-\frac12 (\alpha-\theta)^2-4|(l_1-\alpha)(l_1-\theta)|\sin^2\tilde\varphi \right\} &\mbox{if} & 0\le \theta\le l_1\le \alpha\le l_1+l_2 \\
&\mbox{or if } &  0\le \alpha\le l_1\le \theta\le l_1+l_2
\end{array}
\right. \\
 b(\alpha,\theta)&=& \left\{ 
\begin{array}{l}
(\theta-\alpha)\exp\left\{-\frac12 (\alpha-\theta)^2\right\} \quad \mbox{if} ~  0\le \theta,\alpha\le l_1\quad \mbox{or if }~  \theta,\alpha \ge l_1\\
~\\
%%%
\big(\theta-\alpha+2(\alpha -l_1)\cos^2\tilde\varphi \big)\exp\left\{-\frac12 \left[(\alpha-\theta)^2 +4(\alpha-l_1)(2\alpha-l_1-\theta) \cos^2\tilde\varphi \right]\right\} \\
\qquad\qquad\qquad\qquad \qquad\qquad \qquad\qquad\qquad\qquad   \qquad\qquad  \mbox{if} ~ 0\le \theta\le l_1\le \alpha\le l_1+l_2\\
%%%
-\big(\theta-\alpha+2(\alpha -l_1)\cos^2\tilde\varphi \big)\exp\left\{-\frac12 \left[(\alpha-\theta)^2 +4(\alpha-l_1)(2\alpha-l_1-\theta) \cos^2\tilde\varphi \right]\right\} \\
\qquad\qquad \qquad\qquad \qquad\qquad \qquad\qquad  \qquad\qquad \qquad\qquad   \mbox{if}  \quad 0\le \alpha\le l_1\le \theta\le l_1+l_2 .
\end{array}\right.  
 \end{eqnarray*}
Therefore (\ref{approximEq}) can be computed numerically when replacing
$f_{Y'_{\theta}}$  by a standard normal density function and  $\xi^{(m)}= (\xi_1,\cdots,\xi_m)$ by a  Gaussian ${\cal N}(0, \Sigma_m)$ with the covariance matrix $\Sigma_m$
given by
$$
var(\xi_i)= 1-a^2(\eta_i,\theta)-b^2(\eta_i,\theta) \quad (i=1,\cdots,m)
$$
and, for $1\le i\ne j\le m$, for $\eta_i\ne\eta_j$,
$$
cov(\xi_i,\xi_j)= a(\eta_i,\eta_j)-a(\eta_i,\theta)a(\eta_j,\theta)-b(\eta_i,\theta)b(\eta_j,\theta).
$$
\end{example}

\subsection{Joint distribution for $k$ line segments via a growing circle}

We can extend to $k$ segments what has been previously developed for two ones, considering a  growing circle of radius $t>0$, with center in 0, under the same assumptions on $X$. 
Let be $v_1,\ldots , v_k \in {\cal S}^1$, denoting $k$ directions, and $\varphi_j$ be the angle between $(oe_1)$ and $(ov_j)$: 
\begin{equation}\label{phi_j}
\varphi_j=\angle{(oe_1, ov_j)}, \; j=1,\ldots,k.
\end{equation} 
 Then $X_{tv_j}=X_{t\cos\varphi_j,t\sin\varphi_j}$. \\
For $l_1,\ldots , l_k>0$, we define the union of segments $K=\bigcup_{i=1}^k [0,l_i v_i]$.  
The method consists in introducing a circle and making it grow  with $t$ until meeting a $u$-crossing by $X_s$, for $s\in K$.\\
Setting $\Theta_t =\left \{s=(s_1,\ldots,s_k)\in K: \sum_{i=1}^k s_i^2 \le t^2\right\}$, we can write (analogously to (\ref{expectY}), using Rice formula)
\begin{equation}\label{expectXk}
P[L_1>l_1,\ldots, L_k>l_k]= 1-\sum_{i=1}^k \int_0^{l_i} \E\big[|\partial_{v_i}X_{tv_i}| ~1_{\big(X_s\le u, \forall s\in\Theta_{t}\big)} ~/~X_{tv_i}=u\big] f_{X_{tv_i}}(u)dt . \quad
\end{equation}
Now let us compute the conditional expectation, denoted by $E_i(t)$, appearing as an integrand in (\ref{expectXk}).
We can write, for fixed $i$ and $t\le l_i$,
\begin{eqnarray} \label{condExpect-i}
E_i(t)& = &   \E\big[|\partial_{v_i}X_{tv_i}| ~1_{\big(X_s\le u, \forall s\in\Theta_{t}\big)} ~/~X_{tv_i}=u\big] \nonumber \\
&=& \E\big[|\partial_{v_i}X_{tv_i}| ~1_{\big(X_{hv_i}\le u, ~\forall h\le t\big)}~1_{\big(X_{hv_j}\le u, \forall h\le min(l_j,t),~\forall  j\ne i\big)}~/~X_{tv_i}=u\big]\nonumber\\
&=& \E\big[|\partial_{v_i}X_{tv_i}| ~1_{\big(X_{hv_j}\le u,\, \forall h\le min(l_j,t),~\forall  j=1,\ldots,k \big)}~/~X_{tv_i}=u\big]
\end{eqnarray}
since, for $j=i$, $\min(l_i,t)=t$.\\
Once again, we proceed by standard Gaussian regression, regressing $X_{hv_j}$ on $\big(X_{tv_i}, \partial_{v_i}X_{tv_i}\big)$ at given $h$, $i$ and $t$, for any $j=1,\ldots,k$. So we consider
\begin{eqnarray}
X_{hv_j} &=& Z_{h,j}+\alpha_h^j X_{tv_i}+\beta_h^{j} ~ \partial_{v_i}X_{tv_i} \label{Xvireg}\\
 \mbox{with} \quad 
 \alpha_h^{j} & = & r\big(tv_i-hv_j \big), \nonumber\\
\beta_h^{j} & = &  \cos\varphi_i ~\partial_{10} r\big(tv_i-hv_j \big) + \sin\varphi_i ~\partial_{01}r\big(tv_i-hv_j \big),  \nonumber \\
Z_{h,j} &:& \mbox{independent of}~ (X_{tv_i}, \partial_{v_i}X_{tv_i}), ~\mbox{Gaussian, mean}~0,~ var(Z_{h,j})= 1-(\alpha^j_h)^2-(\beta^j_h)^2 \nonumber\\ 
&&  \mbox{and}~ \E[Z_{h,j} Z_{l,n}] = \E[X_{hv_j}X_{lv_n}] - \alpha_h^{j}\alpha_l^{n} - \beta_h^{j}\beta_l^{n} = r\big(hv_j-lv_n\big)  - \alpha_h^{j}\alpha_l^{n} - \beta_h^{j}\beta_l^{n}. \nonumber
\end{eqnarray}
Notice that we took $Z_{h,j}= Z^{i,t}_{h,j}$, $\alpha_h^{j}=\alpha_h^{i,j}$ and  $\beta_h^{j}=\beta_h^{i,j}$ to simplify the notations when working at given $i$ and $t$.\\
The conditional expectation (\ref{condExpect-i}) can be written as
\begin{eqnarray*}
E_i(t)&=&   \E\big[|\partial_{v_i}X_{tv_i}| 
1_{\big(Z_{h,j}+\alpha_h^j X_{tv_i}+\beta_h^{j} ~ \partial_{v_i}X_{tv_i}\le u, ~\forall h\le min(l_j,t),~\forall  j=1,\ldots,k \big)}~/~X_{tv_i}=u\big] \\
&=&   \E\big[|\partial_{v_i}X_{tv_i}| 
1_{\big(Z_{h,j}+\beta_h^{j} ~ \partial_{v_i}X_{tv_i}\le u(1-\alpha_h^j), ~\forall h\le min(l_j,t),~\forall  j=1,\ldots,k \big)}\big] \\
%%%%%%
&=& \E \left( \E\big[|\partial_{v_i}X_{tv_i}| 1_{\big(Z_{h,j}+\beta_h^{j} ~ \partial_{v_i}X_{tv_i}\le u(1-\alpha_h^j),~ \forall h\le min(l_j,t),~\forall  j=1,\ldots,k \big)}\big] / ( Z_{h,j})_{h\le t, 1\le j\le k}\right)
\end{eqnarray*}
using the independence of ($X_{tv_i}$, $\partial_{v_i}X_{tv_i}$, $(Z_{h,i})$).\\
Now we can evaluate $E_i(t)$ via discretization and using once again the above mentioned independence.
We discretize equidistantly the interval $\displaystyle [0,\max_{1\le i\le k} l_i]$, as $\displaystyle [0,h_1]\cup \big(\cup_{i=1}^{n-1}(h_i,h_{i+1}]\big)$ with $\displaystyle h_n=\max_{1\le i\le k} l_i$ and introduce the corresponding Gaussian vector 
$Z^{(n)}=(Z_{h_m,j}; 1\le m\le n, 1\le j\le k)$ with d.f. $f_{Z^{(n)}}$ and cdf $F_{Z^{(n)}}$. Note that we apply the same discretization in any direction $v_i$, $i=1,\ldots,k$.\\
Then Lemma~\ref{khintchinExt}  can be applied to the $k$ segments, substituting $I(\theta)$ by $[0, l_i v_i]$, and $\eta_1,\ldots,\eta_m$ by $0, h_1 v_i, \ldots, h_{m_i}v_i$, with $h_{m_i}\le l_i< h_{m_i+1}$, for $i=1,\ldots,k$. We obtain
\begin{eqnarray*}
E_i(t)&=&\lim_{n\to\infty} \int_{\R^{n\times k}} \E\big[|\partial_{v_i}X_{tv_i}| ~
1_{\big(\beta_{h_m}^{j} ~ \partial_{v_i}X_{tv_i}\le u(1-\alpha_{h_m}^j)-z_{h_m,j}, \forall h_m\le \min( l_j,t),~\forall  j=1,\ldots,k \big)}\big] f_{Z^{(n)}}(z)dz \\
&=& \lim_{n\to\infty} \int _\R|y| \left(\int_{\R^{n\times k}} 
1_{\big(z_{h_m,j}\le  u(1-\alpha_{h_m}^j)- y\beta_{h_m}^{j} , \forall h_m\le \min( l_j,t),~\forall  j=1,\ldots,k \big)} f_{Z^{(n)}}(z)dz\right) f_{\partial_{v_i}X_{tv_i}} (y)dy \\
&=& \lim_{n\to\infty}  \int_\R |y| F_{Z^{(n)}}\big(w(y,u,\alpha,\beta,t)\big)~f_{\partial_{v_i}X_{tv_i}} (y)dy
\end{eqnarray*}
where $f_{\partial_{v_i}X_{tv_i}}$ denotes the d.f. of $\partial_{v_i}X_{tv_i}$,
\begin{eqnarray}\label{wij}
&& \mbox{and } w(y,u,\alpha,\beta,t) \, \mbox{is a } n\times k-\mbox{matrix having components }\\
&& (w_{mj} ; ~1\le m\le n, 1\le j \le k)\, \mbox{given by} \,
w_{mj} = \left\{\begin{array}{ll}
u(1-\alpha_{h_m}^j)- y\beta_{h_m}^{j} & \mbox{if}~h_m\le \min( l_j,t) \\
+\infty &\mbox{otherwise}.
\end{array}\right. \nonumber
\end{eqnarray}
We can conclude to the following result:
\begin{theorem}
Let $X$ be a stationary Gaussian random field, mean 0 and variance 1, with $C^1$ paths
and a  twice differentiable correlation function $r$.   Then 
$$
P[L_1>l_1,\ldots, L_k>l_k]= 1-\lim_{n\to\infty} \sum_{i=1}^k \int_0^{l_i}\left(\int_\R |y| F_{Z^{(n)}}\big(w(y,u,\alpha,\beta,t)\big)~f_{\partial_{v_i}X_{tv_i}} (y)dy\right) f_{X_{tv_i}}(u)dt
$$
where $w$ is defined in (\ref{wij}).
\end{theorem}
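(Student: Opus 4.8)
The plan is to assemble the pieces already prepared above, exactly in the spirit of the proof of Theorem~\ref{prop2chords}, and to justify carefully the two places where a limit is exchanged with an integral. First I would establish the growing-circle Rice identity (\ref{expectXk}). By definition of the visibilities, $\{L_1>l_1,\ldots,L_k>l_k\}=\{0\in A_u^c\}\cap\{X_s\le u,\ \forall s\in K\}$, i.e.\ the absence of any up-crossing of level $u$ by $X$ on $K=\cup_{i=1}^k[0,l_iv_i]$. Growing the circle $\{\|x\|=t\}$, the first crossing of $K$, if it occurs, lies on exactly one ray $[0,l_iv_i]$, at radius $t\le l_i$ with $X_{tv_i}=u$ and $X_s\le u$ for all $s\in\Theta_t$; hence $1-P[L_1>l_1,\ldots,L_k>l_k]$ is the expectation of the number of such ``first'' crossings, a quantity a.s.\ equal to $0$ or $1$. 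Applying the Rice formula on each segment $[0,l_iv_i]$ to $t\mapsto X_{tv_i}$ with the weight $1_{(X_s\le u,\ \forall s\in\Theta_t)}$ --- legitimate since $X$ has $C^1$ paths and $r$ is twice differentiable, so the mean number of crossings on each segment is finite (\cite{cr/le,az/wsc}) --- and using stationarity to factor out $f_{X_{tv_i}}(u)$, yields (\ref{expectXk}), the $k$-ray analogue of (\ref{expectY}).

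Next, for fixed $i$ and $t\le l_i$, I would reduce the integrand $E_i(t)$ as in (\ref{condExpect-i}): on $\Theta_t$ the constraint $X_s\le u$ is, ray by ray, the family $\{X_{hv_j}\le u,\ \forall h\le\min(l_j,t)\}$, $j=1,\ldots,k$ (with $\min(l_i,t)=t$). Regressing $X_{hv_j}$ on $(X_{tv_i},\partial_{v_i}X_{tv_i})$ gives (\ref{Xvireg}); here $X_{tv_i}$ and $\partial_{v_i}X_{tv_i}$ are independent because $\E[X_{tv_i}\partial_{v_i}X_{tv_i}]=\tfrac12\partial_{v_i}\E[X_{tv_i}^2]=0$ by stationarity, and the residual field $(Z_{h,j})$ is centred Gaussian, independent of $(X_{tv_i},\partial_{v_i}X_{tv_i})$, with the covariances recorded after (\ref{Xvireg}). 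Substituting $X_{tv_i}=u$ and conditioning on $(Z_{h,j})$ expresses $E_i(t)$ as an expectation of $|\partial_{v_i}X_{tv_i}|$ against the residual process. I then discretize $[0,\max_{i}l_i]$ equidistantly into $h_1,\ldots,h_n$, form the Gaussian vector $Z^{(n)}$, and invoke Lemma~\ref{khintchinExt} with $I(\theta)$ replaced by each ray $[0,l_iv_i]$ and with partition points $\{h_mv_i\}$: the mean number of crossings on each ray being finite and the probability of a short sub-interval carrying two or more crossings tending to $0$, the dominated-convergence argument of that lemma lets one replace the continuous family of constraints by its finite discretization with an error vanishing as $n\to\infty$. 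For fixed $n$, conditioning on $Z^{(n)}=z$, using the independence of $\partial_{v_i}X_{tv_i}$ from $Z^{(n)}$, and carrying out first the $z$-integration (which produces $F_{Z^{(n)}}$ evaluated at the threshold matrix $w(y,u,\alpha,\beta,t)$ of (\ref{wij}), the $+\infty$ entries encoding the vacuous constraints $h_m>\min(l_j,t)$) and then the $y$-integration of $|\partial_{v_i}X_{tv_i}|$ against its centred Gaussian density, I obtain $E_i(t)=\lim_{n\to\infty}\int_\R|y|F_{Z^{(n)}}(w(y,u,\alpha,\beta,t))f_{\partial_{v_i}X_{tv_i}}(y)\,dy$.

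Finally, plugging this into (\ref{expectXk}) and interchanging $\lim_{n\to\infty}$ with $\sum_{i=1}^k\int_0^{l_i}(\,\cdot\,)f_{X_{tv_i}}(u)\,dt$ gives the stated formula. This last exchange is justified by dominated convergence: since $0\le F_{Z^{(n)}}\le 1$, the inner $y$-integral is bounded by $\int_\R|y|f_{\partial_{v_i}X_{tv_i}}(y)\,dy=\E|\partial_{v_i}X_{tv_i}|$, which by stationarity is a finite constant independent of $t$, so the $t$-integrand is dominated uniformly in $n$ on the bounded interval $[0,l_i]$, and likewise the finite sum over $i$ causes no difficulty. I expect the only genuinely delicate points to be the rigorous justification of the growing-circle Rice identity in the first step --- one must check that $1_{(X_s\le u,\ \forall s\in\Theta_t)}$ is an admissible weight in the Kac--Rice formula and that the finitely many non-smooth points of the radial description contribute nothing --- together with the verbatim transfer of the hypotheses of Lemma~\ref{khintchinExt} to a union of $k$ rays; the two dominated-convergence arguments are then routine.
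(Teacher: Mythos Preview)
Your proposal is correct and follows essentially the same approach as the paper: the growing-circle Rice identity (\ref{expectXk}), the ray-by-ray reduction (\ref{condExpect-i}), the Gaussian regression (\ref{Xvireg}), and the discretization via Lemma~\ref{khintchinExt} are exactly the steps the paper takes in the derivation preceding the theorem. Your additional care in justifying the exchange of $\lim_{n\to\infty}$ with $\sum_i\int_0^{l_i}\cdots\,dt$ by dominated convergence (using $0\le F_{Z^{(n)}}\le 1$ and the finiteness of $\E|\partial_{v_i}X_{tv_i}|$) is a point the paper leaves implicit, so in that respect your write-up is slightly more complete.
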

Note that we can deduce from this result a way to evaluate numerically the joint distribution $P[L_1>l_1,\ldots, L_k>l_k]$, as done in Corollary~\ref{numericalApprox}.

\section{The  second moment measure}\label{redmom}

\n Now we describe a method to determine the  second moment measure of the length measure of the boundary $\partial A_u$, as defined in (\ref{redsec}).
It is based on the classical Crofton formula of integral geometry which is widely used in stereology. It allows to determine the length of a planar curve by an integral of the  number of intersection points of the curve with "test" lines, and the integration goes over all lines of the plane w.r.t. a motion invariant measure on the set of lines.
Note that this second moment measure has been studied in \cite{az/wsc} (see Theorem 6.9 and the associated comment p. 181), using another approach, namely the co-area formula. \\

\n Denote by $ G$ the set of all lines in the plane. 
The  $\sigma$-algebra ${\cal G}$ on $G$ is induced by an appropriate parametrization and the Borel $\sigma$-algebra on the parameter space.
Further, $dg$ denotes the element of the measure on $(G,{\cal G})$ which is invariant under translation and rotation of the plane, and normalized such that
$\int ~1 \{ g\cap A \not= \emptyset \}\, dg =2\pi $, for the unit circle $A\subset \R^2$.

\n Let $C(g\cap B)$ denote the number of crossings of $u$ by $X$ on the line $g$ within a set $B\subset\R^2$.

\begin{theorem}\label{pairsectionEIK} 
Let $X$ be a stationary Gaussian random field, mean 0 and variance 1, with $C^1$ paths.
Assume $\partial A_u$ to be smooth (in the sense that it can be parametrized by a $C^1$ mapping).
Then, for bounded Borel sets $B_1,B_2\subset \R^2$, for which $g_1 \cap B_1$ and $g_2 \cap B_2$ consist of finitely many line segments for all lines $g_1, g_2$, 
 we have
\begin{eqnarray}\label{stereolindicatorTh}
\mu ^{(2)}(B_1\times B_2) & = & \frac{1}{4}~ \int \int \E\left[ C(g_1 \cap B_1) \cdot C(g_2 \cap B_2) \right]\, dg_1\, dg_2\,.
\end{eqnarray}
For $g_1\neq g_2$ and not parallel, denote $p\in\R^2$ such that $\{p\}=g_1\cap g_2$, and consider $v_1, v_2\in {\cal S}^1$ with $v_1\neq v_2$ such that $g_1=\R v_1+p,\ g_2=\R v_2+p$. Then 
the expectation appearing as the integrand in (\ref{stereolindicatorTh}), is  given by
\begin{eqnarray}\label{twolinesindicator} 
&&\E \left[ C( g_1 \cap B_1) \cdot C(g_2 \cap B_2) \right] = \\   
&& \!\!\!\!\!  \int \!\!\!\!\! \int  \E[|\partial_{v_1}X_{sv_1}\cdot \partial_{v_2}X_{tv_2} | /
X_{sv_1}=X_{tv_2} =u] ~f_{X_{sv_1},X_{tv_2}}(u,u) \, 1_{B_1-p}(sv_1)  1_{B_2-p}(tv_2) \, ds \,  dt \nonumber
\end{eqnarray} 
where $f_{X_{sv_1},X_{tv_2}}$ denotes the d.f. of $(X_{sv_1}, X_{tv_2})$.
\end{theorem}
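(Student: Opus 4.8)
\emph{Proof idea.} I would derive the stereological identity (\ref{stereolindicatorTh}) from the Crofton formula, and then obtain the integrand formula (\ref{twolinesindicator}) by applying a second-order Rice (Kac--Rice) formula to the traces of $X$ on the two test lines together with the stationarity of $X$. For the first part: with the normalisation of $dg$ fixed above, the Crofton formula reads $\int_G \#(\gamma\cap g)\,dg = 2\,{\cal H}^1(\gamma)$ for every rectifiable planar curve $\gamma$, and since $\partial A_u$ is assumed to be the image of a $C^1$ map and $B_i$ is bounded, $\partial A_u\cap B_i$ is rectifiable, so ${\cal L}(B_i)={\cal H}^1(\partial A_u\cap B_i)=\tfrac12\int_G \#(\partial A_u\cap g\cap B_i)\,dg$. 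Next I would note that, for each fixed line $g$, almost surely $\#(\partial A_u\cap g\cap B_i)=C(g\cap B_i)$: the trace of $X$ on $g$ is a $C^1$ Gaussian process which, by Bulinskaya's lemma (its derivative being a non-degenerate Gaussian process, see \cite{az/wsc}), has almost surely no tangency to the level $u$, so its $u$-level set within $g\cap B_i$ is finite and coincides with $\partial A_u\cap g\cap B_i$; a Fubini argument in $(\omega,g)$ then gives ${\cal L}(B_i)=\tfrac12\int_G C(g\cap B_i)\,dg$ almost surely. Multiplying the identities for $B_1$ and $B_2$, taking expectations, and interchanging the expectation with the double integral by Tonelli (the integrands being non-negative and jointly measurable in $(\omega,g_1,g_2)$) yields (\ref{stereolindicatorTh}).

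For the integrand (\ref{twolinesindicator}), fix distinct, non-parallel lines $g_1=\R v_1+p$ and $g_2=\R v_2+p$ meeting at $p$, and introduce the $C^1$ Gaussian processes $W^{(1)}_s:=X_{p+sv_1}$ and $W^{(2)}_t:=X_{p+tv_2}$, with derivatives $\dot W^{(1)}_s=\partial_{v_1}X_{p+sv_1}$ and $\dot W^{(2)}_t=\partial_{v_2}X_{p+tv_2}$. Since, by hypothesis, $g_i\cap B_i$ is a finite union of segments, $C(g_i\cap B_i)=\#\{s:\ W^{(i)}_s=u,\ sv_i\in B_i-p\}$ is almost surely finite and, by the one-dimensional Rice formula for $C^1$ stationary Gaussian processes (see \cite{cr/le}), has finite moments. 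I would then apply the order-two Rice formula to the pair $(W^{(1)},W^{(2)})$ --- approximating $C(g_i\cap B_i)$ by $\tfrac1{2\eps}\int 1_{\{|W^{(i)}_s-u|<\eps\}}\,|\dot W^{(i)}_s|\,ds$ over $\{s:\ sv_i\in B_i-p\}$, multiplying, taking expectations and letting $\eps\downarrow0$ --- to obtain
\begin{align*}
& \E\!\left[\,C(g_1\cap B_1)\,C(g_2\cap B_2)\,\right] \\
& \quad = \int\!\!\int \E\!\left[\,|\dot W^{(1)}_s\,\dot W^{(2)}_t|\ /\ W^{(1)}_s=W^{(2)}_t=u\,\right]\, p_{W^{(1)}_s,W^{(2)}_t}(u,u)\, 1_{B_1-p}(sv_1)\, 1_{B_2-p}(tv_2)\, ds\, dt .
\end{align*}
Here $(W^{(1)}_s,W^{(2)}_t)$ is centred Gaussian with unit variances and correlation $r(sv_1-tv_2)$, hence non-degenerate except on $\{sv_1=tv_2\}$, i.e. at $s=t=0$ since the lines are distinct and non-parallel --- a Lebesgue-null set in the $(s,t)$-plane --- and off it the bivariate density is non-degenerate and $W^{(i)}_s,\dot W^{(i)}_s$ are independent, so the conditional expectation is well defined. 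Finally, by stationarity of $X$ the vector $(W^{(1)}_s,W^{(2)}_t,\dot W^{(1)}_s,\dot W^{(2)}_t)$ has the law of $(X_{sv_1},X_{tv_2},\partial_{v_1}X_{sv_1},\partial_{v_2}X_{tv_2})$, which turns the display into (\ref{twolinesindicator}).

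The technical heart, and the step I expect to be the main obstacle, is the rigorous justification of the order-two Rice formula: one must check finiteness of the second moments of the crossing counts on the bounded segments $g_i\cap B_i$, the a.s.\ absence of level-$u$ tangencies, joint non-degeneracy of the relevant Gaussian vectors away from a null set, and then justify interchanging the limit $\eps\downarrow0$ with the double integral (via dominated or monotone convergence). The only genuine singularity of the integrand sits at the intersection point $p$, i.e.\ at $s=t=0$, where $p_{W^{(1)}_s,W^{(2)}_t}(u,u)$ behaves like $\bigl(2\pi\sqrt{1-r(sv_1-tv_2)^2}\,\bigr)^{-1}$; since $v_1$ and $v_2$ are linearly independent, this blow-up is of order $\|(s,t)\|^{-1}$, hence integrable in the plane, while a regression computation shows the conditional moment of $|\dot W^{(1)}_s\dot W^{(2)}_t|$ stays bounded near $p$ (the vanishing of $\nabla r$ at $0$ cancelling the singular part of the regression coefficients). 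Hence the right-hand side of (\ref{twolinesindicator}) is finite and the Kac--Rice limit converges, which is what makes the whole argument go through.
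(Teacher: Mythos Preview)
Your argument is correct and follows the same two-step architecture as the paper's proof: a stereological identity for (\ref{stereolindicatorTh}), then a second-order Rice formula for (\ref{twolinesindicator}). The execution differs in detail. For the first step the paper invokes Theorem~3.1 of \cite{wei/na}, a ready-made second-order Crofton formula for planar fibre processes, whereas you rebuild that identity from the first-order Crofton formula applied pathwise to each ${\cal L}(B_i)$, then multiply and average via Tonelli; your route is more self-contained but costs you the extra Bulinskaya/Fubini argument to identify $\#(\partial A_u\cap g\cap B_i)$ with the crossing count $C(g\cap B_i)$ almost surely. For the second step the paper simply cites the second-moment Rice formula (6.28) of \cite{az/wsc} after parametrising the pair of segments via the map $\rho$ of (\ref{rho}); you instead sketch the Kac--Rice $\eps$-approximation directly and add a useful analysis of the integrable $\|(s,t)\|^{-1}$ singularity at the intersection point $p$, a point the paper leaves implicit in its citation. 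Both routes arrive at the same formula, and your version trades brevity for transparency about where the analytic work actually lies.
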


\n Comments:
\begin{itemize}
\item The product $\partial_{v_1} X_{sv_1}\cdot \partial_{v_2} X_{tv_2}$ may  again be treated, using  Gaussian regression given in (\ref{Xvireg}), but it will not provide a simpler covariance matrix as the one of ($\partial_{v_1} X_{sv_1}$, $\partial_{v_2} X_{tv_2}$) that we computed using (\ref{partial-r}).
\item Sufficient conditions can be given on $X$ and $u$ for $\partial A_u$ to be smooth. We refer to \cite{az/wsc}, $\S 6.2.2$ or \cite{ad/tay},  $\S 6.2$ .
\end{itemize}

\begin{proof} %of Theorem \ref{pairsectionEIK} 

\n The proof of Theorem \ref{pairsectionEIK} is based on two main steps. The first one is an application of the second-order stereology for planar fibre processes proposed in \cite{wei/na}. 
The second one follows the approach developed in Theorem~\ref{prop2chords}.

\n Applying Theorem 3.1 in \cite{wei/na} for $ \partial A_u$  yields
\begin{eqnarray*}%\label{stereolindicator}
\mu ^{(2)}(B_1\times B_2) &= & \frac{1}{4}~ \E \left(\int \int \sum_{y\in \partial A_u \cap g_1} \, \sum_{z\in \partial A_u \cap g_2} ~1_{B_1\times B_2}(y,z)\, dg_1\, dg_2\right) \\
&= & \frac{1}{4}\int \int \E \left[ C( g_1 \cap B_1) \cdot C(g_2 \cap B_2) \right] \, dg_1\, dg_2 \, .
\end{eqnarray*}
Note that integrating on the restricted domain $\{g_1=g_2\} \cup \{g_1\parallel g_2\} $ would give 0 for the double integral and therefore we consider integration only on $\{g_1\neq g_2\}$ $\cap$ $\{g_1$ not parallel to $g_2\}$.\\ 

\n According to the assumption on $B_i$'s, we can write  $g_i\cap B_i=\bigcup_{j=1}^{n_i} I_{ij}$, for $i=1,2$, and $n_i\in {\mathbb N}$, where the $I_{ij}$ are pairwise disjoint intervals.
Then we obtain
$$
\E \left[ C( g_1 \cap B_1) \cdot C(g_2 \cap B_2) \right] = \sum_{j=1}^{n_1}\sum_{k=1}^{n_2}\E \left[ C(I_{1j}) \cdot C(I_{2k}) \right] \, .
$$

\n Let us compute each term of the double sum. For fixed $j,k$, first shift and rotate $g_1, g_2, B_1, B_2$ such that the lines have a representation $g_i=\R v_i$, $i=1,2$, with $v_1, v_2$ as in (\ref{tildephi}).  
Let $\tilde B_i$ and $\tilde I_{1j}, \tilde I_{2k}$  denote the adequate transformations of $B_i$ and $\ I_{1j}, I_{2k}$, respectively.
Then, using  the diffeomorphism $\rho$ analogous to (\ref{rho}), which may also be applied if the intervals do not intersect, and applying Rice type formula for 2nd moment  (see (6.28) in \cite{az/wsc}), 
provide
\begin{eqnarray*}
&&\E \left[ C(I_{1j}) \cdot C(I_{2k}) \right]\, = \\
&& \int_{\tilde I_{1j}\times \tilde I_{2k} } \!\!\! \E~[|\partial_{v_1} Y_{\theta_1}\cdot \partial_{v_2} Y_{\theta_2} |/  Y_{\theta_1}=Y_{\theta_2}=u] ~f_{Y_{\theta_1},Y_{\theta_2}}(u,u)
1_{\tilde B_1\times \tilde B_2}\left(\rho(\theta_1), \rho(\theta_2)\right)d\theta_1 d\theta_2\, .
\end{eqnarray*}
Note that the rotation has been introduced only to apply (\ref{rho}); what does matter is the shift by $p$, the intersection point of $g_1$ and $g_2$.\\
Combining those results provides the Theorem.
\end{proof}

\section*{Acknowledgments} 
The authors acknowledge the support from Deutsche Forschungsgemeinschaft (DFG, grant number WE 1899/3-1) and from MAP5 UMR 8145 Universit{\'e} Paris Descartes.  This work has been carried out during the first author's stay at the Institut f\"ur Stochastik, Friedrich-Schiller-Universit\"at, Jena, and the second author's stay at MAP5, Universit{\'e} Paris Descartes.  The authors would like to thank both the institutions for hospitality and facilitating the research undertaken. Partial support from RARE-318984 (an FP7 Marie Curie IRSES Fellowship) is kindly acknowledged.

\end{document}